\newtheorem{theorem}{Theorem}[section]
\newtheorem{lemma}[theorem]{Lemma}
\newtheorem{corollary}[theorem]{Corollary}
\theoremstyle{definition}
\newtheorem{definition}[theorem]{Definition}
\theoremstyle{remark}
\newtheorem{example}[theorem]{Example}
\numberwithin{equation}{section}
\newcommand { \ib }[1] {\textit{\textbf{#1}}}
\begin{document}
\renewcommand{\ib}{\mathbf}
\renewcommand{\proofname}{Proof}
\renewcommand{\phi}{\varphi}
\newcommand{\conv}{\mathrm{conv}}
\makeatletter \headsep 10 mm \footskip 10 mm

\newcommand{\Zivaljevic}{\v{Z}ivaljevi\'{c}}

\title{Another ham sandwich in the plane}

\author{Alexey~Balitskiy}

\address{Dept. of Mathematics, Moscow Institute of Physics and Technology, Institutskiy per. 9, Dolgoprudny, Russia 141700}
\address{Institute for Information Transmission Problems RAS, Bolshoy Karetny per. 19, Moscow, Russia 127994}

\email{alexey\_m39@mail.ru}

\author{Alexey~Garber}

\address{Faculty of Mechanics and Mathematics, Moscow State University, Moscow, Leninskie gory, 1, Russia, 119991}
\address{B.N.~Delone International Laboratory ``Discrete and Computational Geometry'', Yaroslavl' State University, Sovetskaya st. 14, Yaroslavl', Russia 150000}

\email{alexeygarber@gmail.com}

\thanks{The work of A.~Garber is supported by the Russian Foundation of Basic Research grants 11-01-00633-a and 11-01-00735-a, and the Russian government project 11.G34.31.0053.}

\author{Roman~Karasev}

\address{Dept. of Mathematics, Moscow Institute of Physics and Technology, Institutskiy per. 9, Dolgoprudny, Russia 141700}
\address{Institute for Information Transmission Problems RAS, Bolshoy Karetny per. 19, Moscow, Russia 127994}
\address{B.N.~Delone International Laboratory ``Discrete and Computational Geometry'', Yaroslavl' State University, Sovetskaya st. 14, Yaroslavl', Russia 150000}

\email{r\_n\_karasev@mail.ru}
\urladdr{http://www.rkarasev.ru/en/}

\thanks{The work of R.~Karasev is supported by the Dynasty foundation, the President's of Russian Federation grant MD-352.2012.1, and the Russian government project 11.G34.31.0053.}

\subjclass{52C35, 60D05}
\keywords{Ham sandwich theorem}

\begin{abstract}
We show that every two nice measures in the plane can be partitioned into equal halves by translation of an angle from arbitrary $k$-fan when $k$ is odd and in some cases when $k$ is even. We also give some counterexamples for certain fans and measures.

\end{abstract}

\maketitle

\section{Introduction}

In \cite[Theorem~1]{ziv2013} Rade~\Zivaljevic\ proved that it is possible to cut a half of every one of $d$ nice measures in $\mathbb{R}^d$ by a union of several cones of a simple fan translated by some vector. A simple fan is built of cones on facets of a simplex and a union of several cones is called a ``curtain'' in~\cite{ziv2013}. Throughout this paper we call a measure $\mu$ in $\mathbb R^d$ \emph{nice} if it is normalized with $\mu(\mathbb R^d) = 1$, has compact support, and every affine hyperplane $H$ has $\mu(H) = 0$.

In particular, in the two-dimensional case the curtain partition theorem can be reformulated as follows: For every $3$-fan $\mathcal{F}=\{F_1,F_2,F_3\}$ and for every two nice measures $\mu_1$ and $\mu_2$ in $\mathbb{R}^2$ there are index $i$ and translation $\mathbf{t}+F_i$ of an angle of $\mathcal{F}$ such that 
$\mu_1(\mathbf{t}+F_i)=\mu_2(\mathbf{t}+F_i)=\frac12.$ Here by an \emph{angle} we mean a set consisting of a vertex, two rays from this vertex, and everything between the rays. A particular case of an angle will be a halfplane.

In this note we generalize this theorem for the case of $k$-fans for odd $k$:

\begin{theorem}
\label{odd-fans}
Let $k\ge 3$ be odd. For every $k$-fan $\mathcal{F}=\{F_1,F_2,\ldots, F_k\}$ and for every two nice measures $\mu_1$ and $\mu_2$ in $\mathbb{R}^2$ there are index $i$ and a translation $\mathbf{t}+F_i$ of an angle of $\mathcal{F}$ such that 
$$
\mu_1(\mathbf{t}+F_i)=\mu_2(\mathbf{t}+F_i)=\frac12.
$$
\end{theorem}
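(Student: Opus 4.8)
\smallskip
\noindent\textbf{Proof strategy.}
The plan is to attach to each angle $F_i$ the continuous map
$$
\Phi_i\colon\R^2\to\R^2,\qquad \Phi_i(\mathbf t)=\Bigl(\mu_1(\mathbf t+F_i)-\tfrac12,\ \mu_2(\mathbf t+F_i)-\tfrac12\Bigr),
$$
and to prove that at least one $\Phi_i$ has a zero, which is exactly the assertion of the theorem. Arguing by contradiction, I would assume no $\Phi_i$ vanishes; then for every $i$ and every large enough $R$ the loop $\Phi_i|_{S_R}$ on the circle $S_R=\{|\mathbf t|=R\}$ takes values in $\R^2\setminus\{0\}$ and extends over the disk into $\R^2\setminus\{0\}$, hence has winding number $0$ about the origin. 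The whole argument then reduces to computing these winding numbers and producing a parity obstruction from the fact that $k$ is odd.

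The first step is to describe $\Phi_i$ near infinity. Writing $r_1,\dots,r_k$ for the rays of $\mathcal F$ in cyclic order, so that $F_i$ is the angle between $r_i$ and $r_{i+1}$ (indices mod $k$), one checks that if $\mathbf t\to\infty$ in a direction $z$ then $\mathbf t+F_i$ eventually engulfs $\operatorname{supp}\mu_1\cup\operatorname{supp}\mu_2$ when $z\in\operatorname{int}(-F_i)$ and is eventually disjoint from it when $z\notin\overline{-F_i}$, so $\Phi_i(\mathbf t)$ tends to $(\tfrac12,\tfrac12)$, resp.\ to $(-\tfrac12,-\tfrac12)$. The only transitional directions are $-r_i$ and $-r_{i+1}$; as $\mathbf t$ sweeps past one of them the matching boundary ray of $\mathbf t+F_i$ crosses the supports, so near the supports $\mathbf t+F_i$ is a halfplane with boundary parallel to $r_i$ (resp.\ to $r_{i+1}$), and $\Phi_i$ traverses the weakly monotone arc $\gamma_j$ from $(\tfrac12,\tfrac12)$ to $(-\tfrac12,-\tfrac12)$ swept out by $\bigl(\mu_1(H)-\tfrac12,\mu_2(H)-\tfrac12\bigr)$ as $H$ runs over the halfplanes with the fixed normal $m_j$ (the one pointing locally into $F_j$ across $r_j$). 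Hence for $R$ large $\Phi_i|_{S_R}$ is freely homotopic in $\R^2$ to the loop $\gamma_i*(-\gamma_{i+1})$, where $-\gamma_{i+1}$ is the point reflection of $\gamma_{i+1}$ in the origin; the reflection appears because $F_i$ borders $r_{i+1}$ on the side opposite to $F_{i+1}$, so the halfplanes relevant to $F_i$ at $-r_{i+1}$ are the complements of those relevant to $F_{i+1}$ at $-r_{i+1}$.

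Next comes the sign bookkeeping. Assume first that no halfplane parallel to any $r_i$ bisects both $\mu_1$ and $\mu_2$. Then each monotone corner-to-corner arc $\gamma_j\subset[-\tfrac12,\tfrac12]^2$ avoids the origin; let $\epsilon_j\in\{\pm1\}$ record on which side of $\gamma_j$ the origin lies. Passing to complementary halfplanes reflects the square through the origin and flips this sign, so $-\gamma_{i+1}$ carries the side $-\epsilon_{i+1}$; and a loop made of two monotone corner-to-corner arcs winds once (up to sign) about the origin precisely when the two arcs pass on opposite sides of it. Thus the winding number of $\Phi_i|_{S_R}$ is nonzero exactly when $\epsilon_i=\epsilon_{i+1}$, and the contradiction hypothesis forces $\epsilon_i\ne\epsilon_{i+1}$ for every $i$. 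Finally $\epsilon_j$ depends only on the direction $\rho_j$ of $r_j$ — it is the side on which the $\mu_1$-bisecting halfplane with normal $\perp\rho_j$ leaves $\mu_2$ — so it extends to a function $\epsilon\colon S^1\to\{\pm1\}$ defined off finitely many directions, with $\epsilon_j=\epsilon(\rho_j)$. Any $\{\pm1\}$-valued function on $S^1$ with finitely many jumps undergoes an even number of sign changes in total; but $\epsilon_i\ne\epsilon_{i+1}$ for all $i$ means each of the $k$ arcs $(\rho_i,\rho_{i+1})$ carries an odd number of them, for a total $\equiv k\pmod 2$, which is odd — a contradiction. Hence some $\Phi_i$ vanishes. (The halfplane case $\alpha_i=\pi$ fits automatically: there $\gamma_{i+1}=-\gamma_i$, so $\epsilon_i\ne\epsilon_{i+1}$ is forced and that index is simply never used.)

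The point I expect to demand the most care is the homotopy statement "$\Phi_i|_{S_R}\simeq\gamma_i*(-\gamma_{i+1})$'': one must control $\Phi_i$ uniformly over $S_R$ as $R\to\infty$, in particular verify that within a shrinking angular window about each transitional direction $\Phi_i$ sweeps the \emph{entire} arc $\gamma_i$ (resp.\ $-\gamma_{i+1}$) while staying $o(1)$-close to the constants elsewhere. The second delicate issue is the genericity assumption: when some halfplane parallel to an $r_i$ does bisect both measures one removes it by approximating $\mu_1,\mu_2$ by generic nice measures and passing to the limit, which requires verifying that the witnessing translates do not escape to infinity (and analysing what survives in the limiting halfplane if they do).
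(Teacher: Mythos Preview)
Your argument is correct and lands on the very same parity obstruction the paper uses, but the packaging is different. The paper works directly in the domain: for each angle $F_j$ it looks at the two level curves $S^i_j=\{\mathbf t:\mu_i(\mathbf t+F_j)=\tfrac12\}$, observes that each is asymptotic to the boundary of a translate of $-F_j$ (your arcs $\gamma_j$ are the target-side shadow of these asymptotics), and notes by an elementary intersection-number remark that $S^1_j$ and $S^2_j$ must meet unless one of the two asymptotic angles strictly contains the other. That containment then flips as $j\mapsto j+1$ because the two angles share a bounding halving line, giving the odd-$k$ contradiction. Your version pushes everything to the target via $\Phi_i$ and replaces ``the curves must cross'' by ``the winding number is nonzero''; this is the same information (your $\epsilon_j$ records exactly the relative position of the two $\mu_i$-halving lines $l^1_j,l^2_j$ that the paper tracks), but costs a little more machinery, which is precisely what the paper advertises avoiding. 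Two small remarks: once you have $\epsilon_i\neq\epsilon_{i+1}$ for every $i$ cyclically, the contradiction for odd $k$ is immediate---the detour through a global $\epsilon\colon S^1\to\{\pm1\}$ is unnecessary, and the claim that it has only finitely many jumps is not automatic for general nice measures; and your two ``delicate points'' (uniform control of $\Phi_i|_{S_R}$ and the genericity/limit step) are exactly the places where the paper's standing hypotheses of continuous density and connected support, together with its opening compactness paragraph, do the work.
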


And for some symmetric fans:

\begin{theorem}
\label{symm-fans}
Theorem~\ref{odd-fans} remains true for centrally symmetric fans of $2k$ angles assuming that $k$ is even.
\end{theorem}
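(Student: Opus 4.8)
The plan is to reuse the scheme behind Theorem~\ref{odd-fans} and to observe that central symmetry supplies exactly one extra relation, which forces the conclusion precisely when $k$ is even. Put $\nu=\tfrac12(\mu_1+\mu_2)$, place the apex of $\mathcal F$ at the origin, and let $F_1,\dots,F_{2k}$ be the angles in cyclic order with bounding rays $r_1,\dots,r_{2k}$ (indices mod $2k$), so that $F_i$ lies between $r_i$ and $r_{i+1}$ and central symmetry gives $F_{i+k}=-F_i$, $r_{i+k}=-r_i$. For each $i$ consider
$$M_i=\bigl\{\mathbf t\ :\ \nu(\mathbf t+F_i)=\tfrac12\bigr\}.$$
Since the measures are nice, $\nu(\mathbf t+F_i)$ is monotone in the two signed distances from $\mathbf t$ to the two lines through the origin supporting $F_i$, so $M_i$ is an embedded arc; as $\mathbf t\to\infty$ the value $\nu(\mathbf t+F_i)$ tends to $0$ or $1$ unless $\mathbf t$ runs off in a direction opposite to $r_i$ or $r_{i+1}$, so the two ends of $M_i$ escape to infinity exactly in the directions $-r_i$ and $-r_{i+1}$. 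Hence $M_i$ and $M_{i+1}$ meet at an ideal point in direction $-r_{i+1}$, and after adding one ideal point per ray direction the union $M_1\cup\dots\cup M_{2k}$ becomes a topological circle $\mathcal C$ with $2k$ ideal points $P_1,\dots,P_{2k}$, $P_{i+1}$ lying between $M_i$ and $M_{i+1}$. Along $\mathcal C$ I track $\delta=\mu_1(\mathbf t+F_i)-\tfrac12$ on $M_i$: a zero of $\delta$ in the interior of some arc is a translate $\mathbf t+F_i$ with $\mu_1(\mathbf t+F_i)=\mu_2(\mathbf t+F_i)=\tfrac12$, which is what we want.

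Suppose for contradiction that $\delta\ne 0$ on all of $\mathcal C$; then $\delta$ has a constant sign $\epsilon_i\in\{\pm1\}$ on each closed arc $\overline{M_i}$. At $P_{i+1}$, which encodes the (unique, by niceness) $\nu$-bisecting line $L_{i+1}$ parallel to $r_{i+1}$, the arc $M_i$ limits to the half-plane bounded by $L_{i+1}$ on whichever side $F_i$ occupies near $r_{i+1}$, and $M_{i+1}$ limits to the one $F_{i+1}$ occupies near $r_{i+1}$; these two sides are opposite exactly when $F_i$ and $F_{i+1}$ have the same convexity type. As $\mu_1$ of two opposite half-planes of $L_{i+1}$ sum to $1$, this gives $\epsilon_{i+1}=-\epsilon_i$ when $F_i,F_{i+1}$ are both convex or both reflex and $\epsilon_{i+1}=\epsilon_i$ otherwise; writing $s_i=1$ if $F_i$ is reflex and $s_i=0$ if $F_i$ is convex (or straight), and $\eta_i=(-1)^{s_i}\epsilon_i$, this is the single clean relation $\eta_{i+1}=-\eta_i$, so $\eta_{i+k}=(-1)^k\eta_i$. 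For an odd $k$-fan the same argument on $k$ arcs already forces $(-1)^k=1$, which is false — this is how Theorem~\ref{odd-fans} closes — but with $2k$ arcs it only reproduces the tautology $(-1)^{2k}=1$.

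The missing relation comes from central symmetry. The ideal point $P_{i+1}$ in direction $-r_{i+1}$ and the ideal point $P_{i+1+k}$ in the antipodal direction $-r_{i+1+k}$ encode the \emph{same} bisecting line $L_{i+1}$ (a line carries no orientation), while $F_{i+k}=-F_i$ lies on the side of $L_{i+1}$ opposite to $F_i$; hence $M_{i+k}$ limits at $P_{i+1+k}$ to the half-plane opposite to the one $M_i$ limits to at $P_{i+1}$, so the limiting values of $\delta$ there are negatives of each other. Since central symmetry preserves convexity type ($s_{i+k}=s_i$), this reads $\eta_{i+k}=-\eta_i$. Together with $\eta_{i+k}=(-1)^k\eta_i$ it forces $(-1)^k=-1$, contradicting that $k$ is even. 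Therefore $\delta$ must vanish somewhere on $\mathcal C$.

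It remains to upgrade ``a zero on $\mathcal C$'' to ``a zero in the interior of some arc''. An ideal zero of $\delta$ means the $\nu$-bisecting line parallel to some fan ray also bisects $\mu_1$; for each of the finitely many ray directions this is a codimension-one condition on $(\mu_1,\mu_2)$, so it fails for generic nice measures, and for those the argument above already yields an honest translate. For arbitrary nice $\mu_1,\mu_2$ one approximates by generic ones and passes to the limit of the resulting translates, using compactness of the angle index and continuity of $\mathbf t\mapsto\mu_j(\mathbf t+F_i)$; the delicate point — and, together with the transversality-at-infinity needed to certify that $M_1\cup\dots\cup M_{2k}$ really compactifies to a circle on which $\delta$ extends with the stated limiting half-planes, the main obstacle to a fully rigorous write-up — is to rule out (or otherwise handle) the case where the limiting translate escapes to infinity and one is left only with a half-plane bisecting both measures; the boundary cases of an angle equal to $\pi$ or an $M_i$ that is a straight line are already absorbed by the twist $\eta_i$, and the parity count itself is then immediate.
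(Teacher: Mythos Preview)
Your proof is correct and takes a genuinely different route from the paper. The paper works with two families of curves $S^1_j,S^2_j$ (one for each measure), encodes their asymptotic rays as angles $\alpha^i_j$, and argues that the inclusions $\alpha^1_j\subset\alpha^2_j$ versus $\alpha^2_j\subset\alpha^1_j$ must alternate with $j$; central symmetry then forces $\alpha^1_1\subset\alpha^2_1$ and $\alpha^1_{k+1}\subset\alpha^2_{k+1}$ to hold simultaneously, which is impossible because $\alpha^1_1,\alpha^1_{k+1}$ (and likewise $\alpha^2_1,\alpha^2_{k+1}$) are opposite angles with a common vertex. You instead collapse the two measures into $\nu=\tfrac12(\mu_1+\mu_2)$, build a single curve $M_i$ per angle, glue them into a circle, and track the sign of $\delta=\mu_1-\tfrac12$ around it; the alternation $\epsilon_{i+1}=-\epsilon_i$ plays the role of the paper's alternating inclusions, and your antipodal relation $\epsilon_{i+k}=-\epsilon_i$ plays the role of the common-vertex observation. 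Your framework is a bit more flexible (it visibly also recovers Theorem~\ref{odd-fans}), while the paper's argument stays closer to the concrete geometry and needs no compactification of the configuration space.

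Two remarks that would tighten your write-up. First, in a centrally symmetric $2k$-fan with $k\ge 2$ no angle can be reflex (a reflex angle and its opposite would already exceed $2\pi$), so your convex/reflex twist $\eta_i=(-1)^{s_i}\epsilon_i$ is never needed here. Second, the hedging in your last paragraph is unnecessary: if the $\nu$-bisector $L_{i+1}$ parallel to $r_{i+1}$ also bisects $\mu_1$, then it bisects $\mu_2$ as well, and any point $\mathbf t$ on the ray-end of $M_i$ lying on $L_{i+1}$ (which exists by compact support, exactly as in the paper's Lemma) already satisfies $\mu_1(\mathbf t+F_i)=\mu_2(\mathbf t+F_i)=\tfrac12$. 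So an ``ideal zero'' of $\delta$ is in fact a genuine zero on the interior of $M_i$, and no genericity or limiting argument is required.
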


The proofs of these planar theorems are rather elementary by a simple continuity argument, not requiring the Borsuk--Ulam type theorems like in~\cite{steinhaus1945,stone1942,ziv2013}. What is more interesting, it is possible to find counterexamples in the cases not covered by these theorems, see Section~\ref{section:examples}. In the last Section~\ref{section:noncompact} we show that it is impossible to cut half of two measures with non-compact support with translation of an angle in general case, but some generalization of our results in that case is still possible.

%\begin{remark}
%The assumption that the measures have compact supports might be unnecessary, but we do not know how to remove it. The proof contains some going to the limit argument, under which a limit of a sequence of angles $A_k$ may tend (in Hausdorff metric for intersection with every fixed disk) to a half-plane. But a half-plane is not allowed in the statement of the theorems.
%\end{remark}

\subsection*{Acknowledgments.}
The authors thank Alfredo Hubard and the unknown referee for useful remarks and corrections.

\section{The proofs}

In what follows we make the standard assumptions: The measures are taken to have continuous densities and their supports are assumed to be compact and connected. The general case follows by a standard compactness argument, that we outline as follows. Let all supports of the measure be contained in a big disk $D$. Then for every measure $\mu_i$, by a convolution with a smooth kernel and a slight modification, we find a sequence of measures $\mu_i^{k}$, having continuous densities and connected supports in $D$, and converging to $\mu_i$. The convergence is in the sense that for any angle $A$ we have $\mu_i^{k}(A) \to \mu_i(A)$. Now for every $k$ we have an equipartitioning angle $A_k$, and we may assume that the intersections $A_k\cap D$ converge to an intersection $A\cap D$ in the Hausdorff metric, where $A$ is some angle. Assuming the contrary, without loss of generality  $\mu_i(A) < \frac12$, we see that for a small $\varepsilon$-neighborhood $A_\varepsilon$ of $A$ the inequality $\mu_i(A_\varepsilon) < \frac12$ retains. Here we use that $\mu_i$ of the boundary $\partial A$ and translates of this boundary slightly outside $A$ is zero. Now, for sufficiently large $k$, the convolution kernel's support may be assumed to lie in the $\varepsilon$-neighborhood of zero, and therefore we obtain $\mu_i^k(A_\varepsilon) < \frac12$. Also, for sufficiently large $k$, the angle $A_k$ is inside $A_\varepsilon$ and therefore $\mu_i^k(A_k) < \frac12$, which is a contradiction.

Under the above assumption of continuous density and connected support there is a unique halving line of given direction for any one of the measures. Let $\mathbf{e}_j, j=1,\ldots,k$ be vectors collinear to the rays of the fan $\mathcal{F}$ so that the angle $F_j$ lies between rays collinear to $\mathbf{e}_j$ and $\mathbf{e}_{j+1}$ (counter-clockwise direction and cyclic indices).

\begin{definition}
Denote $S^i_j$ the set of all points $M$ such that $\mu_i(\overrightarrow{OM}+F_j)=\frac12.$ We will call this set {\it the set of $j$-th color for the measure $\mu_i$}.
\end{definition}

\begin{lemma}
The set $S^i_j$ contains two infinite rays $r^i_{j,1}$ and $r^i_{j,2}$ with directions of vectors $-\ib e_j$ and $-\ib e_{j+1}$ that are connected by a continuous curve $\gamma^i_j$ in $S^i_j$. Moreover, we have $r^i_{j,2}=r^i_{j+1,1}$.
\end{lemma}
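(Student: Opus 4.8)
The plan is to understand the geometry of a single color set $S^i_j$ by translating the fan so its apex sits at a variable point $M$ and tracking the unique halving line of $\mu_i$ in the two boundary directions $-\mathbf e_j$ and $-\mathbf e_{j+1}$. Fix the measure $\mu_i$ and the index $j$; write $e = \mathbf e_j$, $f = \mathbf e_{j+1}$. Consider first the boundary ray of $F_j$ in direction $e$: the translate $\overrightarrow{OM} + F_j$ contains a given halfplane bounded by the line through $M$ in direction $e$ precisely when $M$ lies far enough in the direction $-f$ (so that the other boundary ray, in direction $f$, has swung past the support of $\mu_i$). More precisely, I would argue that there is a constant $R$ (depending on the diameter of $\operatorname{supp}\mu_i$) such that for every point $N$ on the unique $\mu_i$-halving line of direction $e$, the point $M = N - R f$ satisfies $\mu_i(\overrightarrow{OM} + F_j) = \frac12$, because the angle $\overrightarrow{OM} + F_j$ intersects $\operatorname{supp}\mu_i$ in exactly the halfplane cut off by that halving line. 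This exhibits an infinite ray $r^i_{j,1}$ inside $S^i_j$ with direction $-f = -\mathbf e_{j+1}$; symmetrically, pushing $M$ far in the direction $-e$ exhibits a ray $r^i_{j,2}$ with direction $-e = -\mathbf e_j$. Wait — I should double-check the labeling against the statement: the lemma claims $r^i_{j,1}$ has direction $-\mathbf e_j$ and $r^i_{j,2}$ has direction $-\mathbf e_{j+1}$, so the two cases above should be relabeled accordingly, and the far-in-direction-$-e$ case is $r^i_{j,1}$, the far-in-direction-$-f$ case is $r^i_{j,2}$.

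Next I would connect these two rays by a curve inside $S^i_j$. The cleanest route is to set up, for each direction $\theta$ in the closed arc of directions running from $e$ to $f$ inside the angle's ``shadow,'' a function that records, for the unique halving configuration, where the apex $M$ must be. Concretely, parametrize: for $t$ ranging over the halving lines of $\mu_i$ swept through all directions from $e$ around to $f$, one obtains a continuous family $M(t)$ with $\mu_i(\overrightarrow{OM(t)} + F_j) = \frac12$; at the endpoints this family runs off to infinity along $r^i_{j,1}$ and $r^i_{j,2}$. The key point here is a monotonicity/intermediate-value argument: as we translate $M$ continuously, $\mu_i(\overrightarrow{OM} + F_j)$ varies continuously (continuity of the density), and by moving $M$ in a fixed transversal direction we can force this measure to increase from below $\frac12$ to above $\frac12$, so the level set $S^i_j$ is nonempty on every relevant transversal and, being a level set of a continuous function, contains a connected piece joining the two rays. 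I would make this precise by the standard trick: along any line $\ell$ not parallel to $e$ or $f$, the function $M \mapsto \mu_i(\overrightarrow{OM}+F_j)$ restricted to $\ell$ is continuous and, for $M$ far out in the two directions of $\ell$, takes values on both sides of $\frac12$ (using the ray analysis from the first paragraph together with connectedness of $\operatorname{supp}\mu_i$), hence hits $\frac12$; a compactness/connectedness argument then assembles these points into the curve $\gamma^i_j$.

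Finally, the identity $r^i_{j,2} = r^i_{j+1,1}$: both rays consist of apex positions $M$ pushed so far out that the angle in question degenerates, as far as $\mu_i$ is concerned, to the same halfplane. The ray $r^i_{j,2}$ is the set of far-out apexes (direction $-\mathbf e_{j+1}$) for which $\overrightarrow{OM}+F_j$ meets the support in the halfplane bounded by the line of direction $\mathbf e_{j+1}$ through a halving point; the ray $r^i_{j+1,1}$ is, by the same description applied to $F_{j+1}$ (whose boundary rays have directions $\mathbf e_{j+1}$ and $\mathbf e_{j+2}$), the set of far-out apexes in direction $-\mathbf e_{j+1}$ for which $\overrightarrow{OM}+F_{j+1}$ meets the support in the halfplane bounded by the same line of direction $\mathbf e_{j+1}$. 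Since $F_j$ and $F_{j+1}$ share exactly the boundary ray of direction $\mathbf e_{j+1}$, and for apexes far in direction $-\mathbf e_{j+1}$ the only part of either cone that meets $\operatorname{supp}\mu_i$ is the common halfplane, these two sets coincide. The main obstacle I anticipate is making the ``far enough'' arguments and the continuity of $M \mapsto \mu_i(\overrightarrow{OM}+F_j)$ fully rigorous while tracking which halfplane each boundary ray cuts out — essentially bookkeeping of directions and orientations — rather than any deep difficulty; the assembly of $\gamma^i_j$ is a routine application of the intermediate value theorem and connectedness of the support.
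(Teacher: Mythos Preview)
Your overall strategy matches the paper's, but the explicit ray construction in your first paragraph is geometrically off. You take $N$ on the $\mu_i$-halving line of direction $e=\mathbf e_j$ and set $M=N-Rf$. But then the $e$-boundary of $\overrightarrow{OM}+F_j$ lies on the $e$-halving line \emph{shifted by $-Rf$}, not on the halving line itself, while the $f$-boundary lies on the line through $N$ of direction $f$. For large $R$ it is the $e$-boundary that leaves the support, so the angle meets the support in the halfplane cut by the $f$-line through $N$; this has measure $\tfrac12$ only when that $f$-line is the $f$-halving line, i.e.\ for one specific $N$, not for every $N$ as you claim. The paper's construction is simply to slide $M$ along the $\mathbf e_j$-halving line in direction $-\mathbf e_j$: then the $\mathbf e_j$-boundary stays on the halving line while the $\mathbf e_{j+1}$-boundary eventually clears the compact support, so the ray $r^i_{j,1}$ lies on the $\mathbf e_j$-halving line with direction $-\mathbf e_j$. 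Your third paragraph in fact describes exactly this correct picture (for $r^i_{j,2}$ and $r^i_{j+1,1}$, both sitting on the $\mathbf e_{j+1}$-halving line), so the confusion is local to paragraph one.

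For the curve $\gamma^i_j$, your transversal-IVT plan is too loose. The claim that along \emph{any} line $\ell$ not parallel to $e$ or $f$ the function $M\mapsto\mu_i(\overrightarrow{OM}+F_j)$ takes values on both sides of $\tfrac12$ is false: if $\ell$ is, say, perpendicular to the bisector of $F_j$ and far enough on the ``into $F_j$'' side of the support, the angle at every $M\in\ell$ misses most of the support and the measure stays below $\tfrac12$. Even on good transversals you do not argue uniqueness, so you do not directly obtain a curve. The paper fixes both issues by choosing the transversals \emph{parallel to the bisector} of $F_j$: translating the apex against the bisector strictly enlarges the angle, so along each such line $\mu_i(\overrightarrow{OM}+F_j)$ is strictly monotone from $0$ to $1$ and equals $\tfrac12$ at exactly one point. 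Hence $S^i_j$ is the graph of a continuous function over the perpendicular-to-bisector coordinate, and that graph is the curve $\gamma^i_j$ joining the two rays.
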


\begin{proof}
Let $l^i_j$ be the line of direction $\ib e_j$ that divides the measure $\mu_i$ into equal parts and $M$ an arbitrary point on this line. The function $$f(\alpha)=\mu_i(\overrightarrow{OM}- \alpha \ib e_j +F_j)$$ of positive real $\alpha$ is continuous and equal to $\frac12$ for sufficiently large $\alpha$ because the measure $\mu_i$ has compact support. This construction proves the existence of a ray $r^i_{j,1}.$ 

By the same reason we can find a suitable ray $r^i_{j-1,2}$ on the same line $l^i_j$ and take joint part of two rays as desired $r^i_{j,1}=r^i_{j-1,2}.$

And the last point of this lemma (existence of a curve) is also obvious since for every prescribed projection of $M$ onto the direction perpendicular to the bisector of $F_j$ there is a unique such $M$ in $S^i_j$ and this $M$ depends continuously on the projection.
\end{proof}

\begin{definition}
Let $\alpha^i_j$ be the angle between rays $r^i_{j,1}$ and $r^i_{j,2}$. These rays could have different starting points, but in that case we can take two lines containing them and choose as $\alpha^i_j$ the angle with sides that have unbounded intersections with rays $r^i_{j,1}$ and $r^i_{j,2}$.  This angle is obtained from $F_j$ by a central symmetry and a translation.

If rays $r^i_{j,1}$ and $r^i_{j,2}$ have opposite directions then construction of $\alpha^i_j$ is obvious and it is also obtained from $F_j$ by a central symmetry and translation.
\end{definition}

\begin{proof}[Proof of Theorem~\ref{odd-fans}]
Assume that for some $j$ the angle $\alpha^1_j$ is not a subset of the angle $\alpha^2_j$ and $\alpha^2_j$ is not a subset of $\alpha^1_j$. Since these angles are translates of each other, this assumption means that their boundaries intersect and either they have a whole ray of intersection, or they intersect transversally precisely once. Then two curves $r^1_{j,1}\cup r^1_{j,2} \cup \gamma^1_j$ and $r^2_{j,1}\cup r^2_{j,2} \cup \gamma^2_j$ have nonempty intersection too. Indeed, they differ from the boundaries $\partial \alpha^1_j$ and $\partial \alpha^2_j$ by modifications on compact parts. If the boundaries had an infinite ray of intersections then these curves will do the same. If the intersection was transversal and unique then these new curves will have an odd intersection index, and therefore have nonempty intersection.

So any point 
$$
X\in (r^1_{j,1}\cup r^1_{j,2} \cup \gamma^1_j)\cap (r^2_{j,1}\cup r^2_{j,2} \cup \gamma^2_j)
$$ 
can serve as the desired vertex of a translated angle $F_j$.

Now we may assume that for every $j$ one of $\alpha^1_j$ and $\alpha^2_j$ strictly contains the other. Without loss of generality, take some $j$ and assume that $\alpha^1_j\subset \alpha^2_j$. Then boundaries of these angles are not intersecting. Now we trace their ``left'' boundaries towards the infinity and note that $\partial\alpha^1_j$ is to the ``right'' of $\partial \alpha^2_j$. Here ``left'' and ``right'' are brief substitutes of ``anticlockwise'' and ``clockwise''. But the ``left'' parts of $\partial\alpha^1_j$ and $\partial \alpha^2_j$ are also the ``right'' parts of $\partial\alpha^1_{j+1}$ and $\partial \alpha^2_{j+1}$ respectively; hence $\alpha^2_{j+1}$ turns out to be a subset of $\alpha^1_{j+1}$. 

So we have shown that if $\alpha^1_j$ contains $\alpha^2_j$ then $\alpha^2_{j+1}$ contains
$\alpha^1_{j+1}$ and vice versa. But this relation is impossible to extend over the cyclic order since $k$ is odd.
\end{proof}

\begin{proof}[Proof of Theorem~\ref{symm-fans}]
The proof follows the same lines. We also conclude that the inclusions $\alpha^1_j\subset \alpha^2_j$ and $\alpha^2_j\subset \alpha^1_j$ alternate when $j$ goes around the circle. Since $k$ is even, we have, without loss of generality, 
\begin{equation}
\label{eq:opposite}
\alpha^1_1\subset\alpha^2_1\quad\text{and}\quad \alpha^1_{k+1}\subset \alpha^2_{k+1}.
\end{equation}
But the angles $\alpha^1_1$ and $\alpha^1_{k+1}$ share the common vertex and are opposite to each other, because they both are defined by the halving lines of $\mu_1$ in two directions. The same applies to $\alpha^2_1$ and $\alpha^2_{k+1}$ and we conclude that (\ref{eq:opposite}) is impossible.
\end{proof}

The above proof actually works in a slightly more general case:

\begin{theorem}\label{opp-rays}
The theorem $\ref{symm-fans}$ remains true for a possibly non-symmetric fan, that contains two opposite rays with even number of angles on one side of these rays.
\end{theorem}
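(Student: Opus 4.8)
The plan is to follow essentially verbatim the argument of Theorem~\ref{symm-fans}, replacing the use of central symmetry of the fan by the weaker hypothesis that the fan has two opposite rays with an even number of angles between them on one side. Concretely, suppose the fan consists of rays and that among these rays there are two, say carrying the directions $\ib e_1$ and $-\ib e_1$, such that on one side (say going counter-clockwise from $\ib e_1$ to $-\ib e_1$) there are an even number, say $2m$, of angles $F_1,\dots,F_{2m}$. First I would run the preliminary constructions exactly as before: Lemma gives the colored sets $S^i_j$ with their rays $r^i_{j,1}$, $r^i_{j,2}$ and connecting curves $\gamma^i_j$, and for each $i,j$ we get the angle $\alpha^i_j$, a translate of a central reflection of $F_j$.

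Next I would rerun the dichotomy from the proof of Theorem~\ref{odd-fans}: either for some $j$ the boundaries of $\alpha^1_j$ and $\alpha^2_j$ cross (including the case of a common ray), in which case the modified curves $r^1_{j,1}\cup r^1_{j,2}\cup\gamma^1_j$ and $r^2_{j,1}\cup r^2_{j,2}\cup\gamma^2_j$ must meet by the intersection-index argument, and any point of intersection is a valid vertex for a translated $F_j$; or else, for every $j$, one of $\alpha^1_j,\alpha^2_j$ strictly contains the other, and the ``left boundary of $\alpha^i_j$ equals right boundary of $\alpha^i_{j+1}$'' observation forces the inclusions $\alpha^1_j\subset\alpha^2_j$ and $\alpha^2_j\subset\alpha^1_j$ to alternate as $j$ increases. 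The key point is that this alternation argument only uses that consecutive angles share a boundary ray, so it applies along the chain $F_1,F_2,\dots,F_{2m}$ between the two opposite rays, regardless of what the fan does on the other side.

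Now I would extract the contradiction. Index the angles so that $r^i_{1,1}$ lies along the halving line of $\mu_i$ in the direction $\ib e_1$ and $r^i_{2m,2}$ lies along the halving line of $\mu_i$ in the direction $\ib e_{2m+1}=-\ib e_1$ — here I use the hypothesis that $\ib e_1$ and $-\ib e_1$ are genuinely both ray-directions of the fan, so these are the boundary rays of the chain. Since the alternation has even length $2m$, if we normalize so that $\alpha^1_1\subset\alpha^2_1$ then we also get $\alpha^1_{2m+1}\subset\alpha^2_{2m+1}$, where $\alpha^i_{2m+1}$ denotes the angle built from the halving line of $\mu_i$ in direction $-\ib e_1$ together with the next boundary ray; more precisely I only need the conclusion that the ray $r^1_{2m,2}$ is on the clockwise side of $r^2_{2m,2}$ while $r^1_{1,1}$ is on the clockwise side of $r^2_{1,1}$. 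But $r^i_{1,1}$ and $r^i_{2m,2}$ are opposite rays through different points, both determined by halving lines of $\mu_i$ in the two opposite directions $\pm\ib e_1$; hence the relative position of $r^1_{1,1}$ versus $r^2_{1,1}$ is exactly opposite to the relative position of $r^1_{2m,2}$ versus $r^2_{2m,2}$, contradicting that both put the superscript-$1$ ray on the clockwise side. This is the same mechanism as in~(\ref{eq:opposite}).

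The main obstacle I expect is purely bookkeeping: being careful about which boundary ray of a colored set corresponds to which halving line, and making sure the ``clockwise/anticlockwise'' comparison of two parallel opposite rays through distinct points is stated unambiguously (it reduces to comparing the signed perpendicular offsets of the two halving lines of a fixed measure in opposite directions, which are negatives of each other). Once that is pinned down, the parity count over the even-length chain closes the argument exactly as in the symmetric case, and the non-symmetric part of the fan never enters.
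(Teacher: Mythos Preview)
Your proposal is correct and follows essentially the same approach as the paper: run the alternation argument along the chain of $2m$ angles between the two opposite rays, and derive a contradiction from the fact that the two boundary rays at the ends of the chain lie on the \emph{same} halving line of each measure (since they are in opposite directions $\pm\ib e_1$). The paper phrases the endgame slightly more cleanly by passing to the angle $\alpha^i_{i+2m}$ on the far side and noting that $\alpha^1_i,\alpha^1_{i+2m}$ (resp.\ $\alpha^2_i,\alpha^2_{i+2m}$) share a common line and lie on opposite sides of it, so the pair of inclusions $\alpha^1_i\subset\alpha^2_i$ and $\alpha^1_{i+2m}\subset\alpha^2_{i+2m}$ is immediately impossible; this avoids the clockwise/anticlockwise bookkeeping you flagged (where your stated orientation is in fact flipped, though consistently so, and hence harmless).
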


\begin{proof}
The same argument as for the symmetric fans with $4k$ angles works for this case too. Assume that vectors $\ib e_i$ and $\ib e_{i+2m}$ has opposite directions. Without loss of generality we can assume that 
\begin{equation}
\label{eq:opposite2}
\alpha^1_i\subset\alpha^2_i\quad\text{and}\quad \alpha^1_{i+2m}\subset \alpha^2_{i+2m}.
\end{equation}
But the angles $\alpha_i^1$ and $\alpha_{i+2m}^1$ share the common line and lie on opposite sides of this line. The same applies to $\alpha_i^2$ and $\alpha_{i+2m}^2$, and the construction from \ref{eq:opposite2} is impossible.
\end{proof}

\section{Examples of measures and fans}
\label{section:examples}

\begin{example}\label{4fan} We start from an example of two measures and a 4-fan shown on the figure \ref{pict:4fan}. Each red point (small disc) represents one third of the measure $\mu_1$, they are located in the midpoints of the regular triangle represented by dashed lines. And each blue point represents one third of the measure $\mu_2$, here one point lies in the center of the red triangle. One can easily check that no angle from the fan on the right of Figure~\ref{pict:4fan} can divide each measure on two equal parts after a translation (all angles of this fan are parallel to sides of the dashed triangle on the left part).

Indeed, for the top angle we observe that once its copy contains a blue point then it already contains two red points in its interior. For the angle to the left (and the one to the right), we observe that once it touches at least two red points then it already contains in its interior some two of the blue points. For the angle at the bottom we observe that once it touches some two of the blue points then it already contains two red points in its interior.

\begin{center}
\begin{figure}[!ht]
\includegraphics[width=0.4\textwidth]{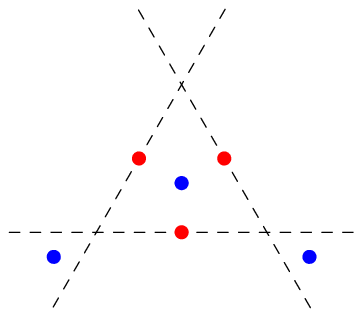}
\hskip 0.5cm
\includegraphics[width=0.4\textwidth]{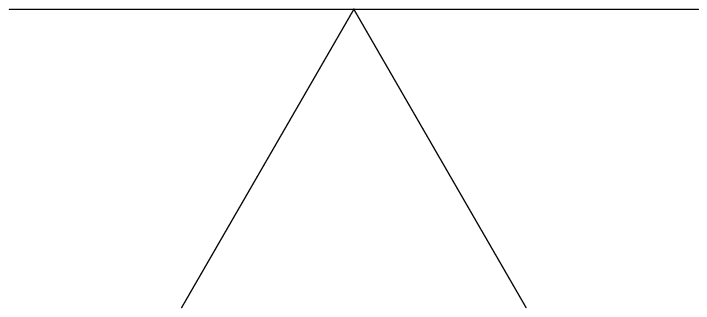}
\caption{Two measures and a $4$-fan without equipartitioning.}
\label{pict:4fan}
\end{figure}
\end{center}
\end{example}

Similarly we can construct example of $2k$-fan based on the regular $(2k-1)$-gon.

\begin{example}
The $2k$-fan consists of one angle equal to $\pi$ and $2k-1$ angles equal to $\frac{\pi}{2k-1}$. 
We start from a regular $(2k-1)$-gon and distribute the red measure $\mu_1$ in its midpoints of sides for even $k$ (see Figure~\ref{pict:4kfan}), or in the vertices for odd $k$ (see Figure~\ref{pict:4k2fan}). 

Let the blue measure $\mu_2$ have $k-1$ points in the ``inner'' region of the same $(2k-1)$-gon and additional $k$ points in the ``outer'' angles of the same $(2k-1)$-gon that lie below the ``bottom'' line. This is shown in Figures~\ref{pict:4kfan} and \ref{pict:4k2fan}. 

To explain this counterexample, we start with the top $\pi$ angle. Any of its translations touching more than a half of the blue measure must contain the ``bottom'' line of the $(2k-1)$-gon and therefore already contains at least $k$ red points in the interior. So this angle cannot make an equipartition.

Now consider the angle $F_1$ next to the top angle. If, after a translation, it touches more than a half of the red points then it must contain in the interior the $k-1$ ``inner'' blue points (it is seen in Figures~\ref{pict:4kfan} and \ref{pict:4k2fan} for both cases) and one more blue point to the left in the corresponding outer angle of the $(2k-1)$-gon. Because if an ``inner'' blue point is outside of it or on its boundary then the extension of one of its sides, passing almost through the center of the $(2k-1)$-gon, completely separates this angle from some $k$ of red points.

As for the next angle $F_2$, after a translation it must touch one of the ``inner'' blue points and at least one of the ``outer'' blue points (if it only touches ``outer'' blue points then it already contains everything). Then it is seen from the picture that this angle will also contain in its interior some $k$ red vertices of the $(2k-1)$-gon, because one of its sides is completely outside the red polygon, and the other cuts the bigger part of it. The argument for the remaining angles $F_j$ is the same depending on the parity of $j$.

\begin{center}
\begin{figure}[!ht]
\includegraphics[width=0.6\textwidth]{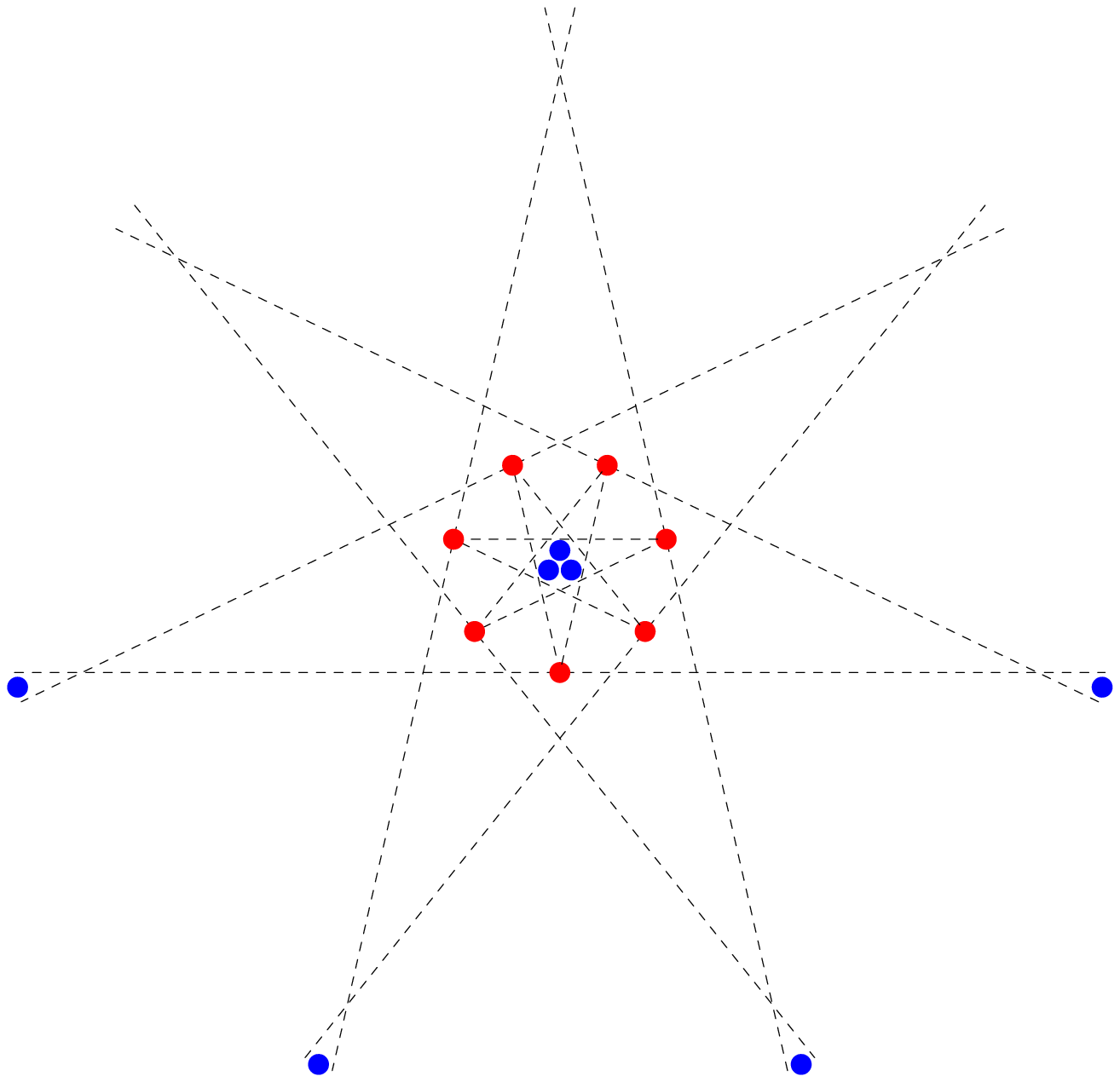}
\hskip 0.5cm
\includegraphics[width=0.3\textwidth]{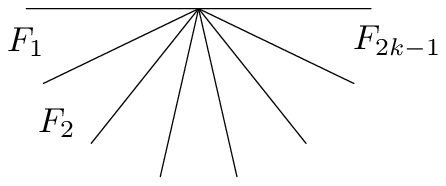}
\caption{Two measures and a $4m$-fan without equipartitioning.}
\label{pict:4kfan}
\end{figure}
\end{center}

\begin{center}
\begin{figure}[!ht]
\includegraphics[width=0.6\textwidth]{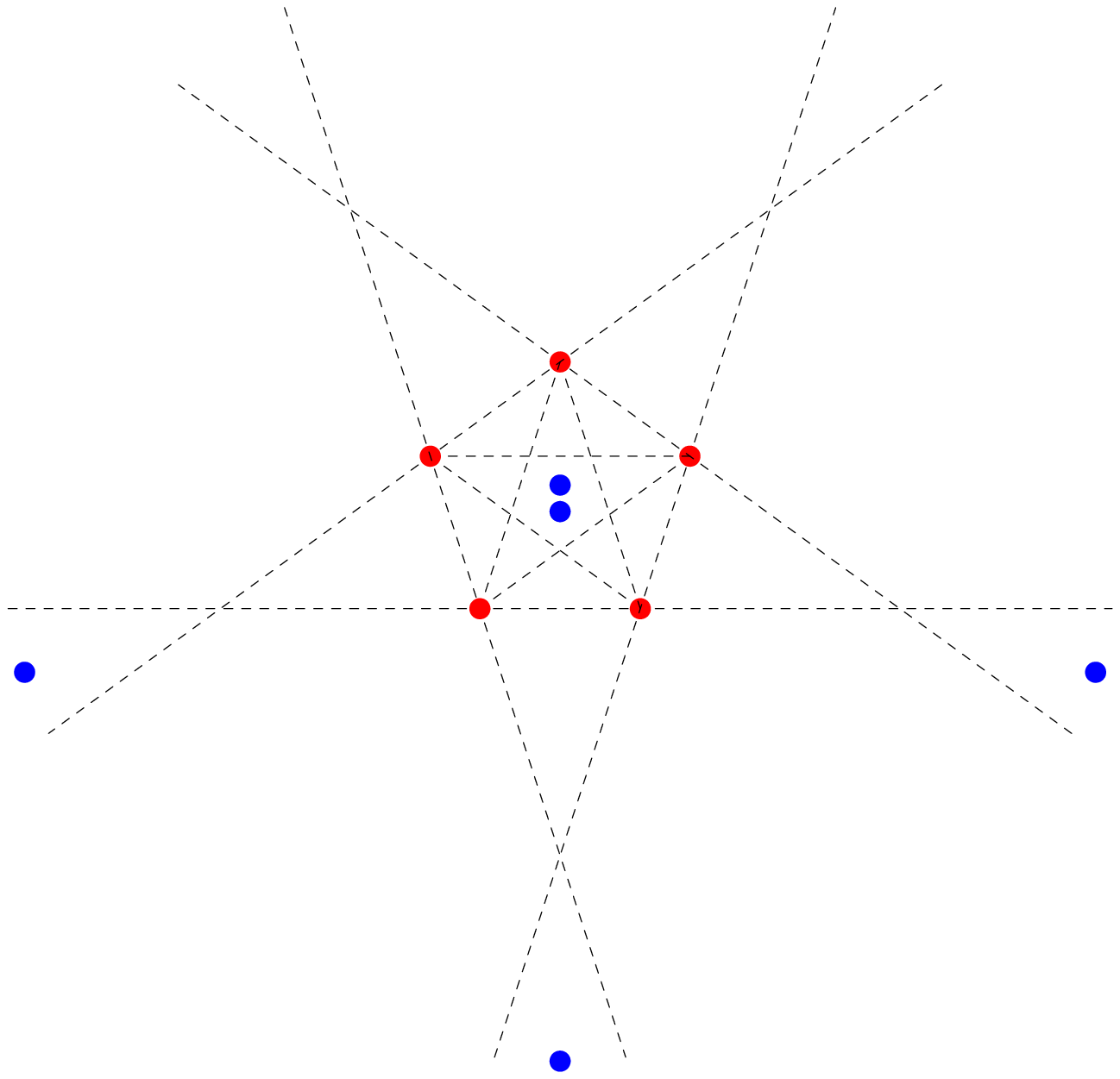}
\hskip 0.5cm
\includegraphics[width=0.3\textwidth]{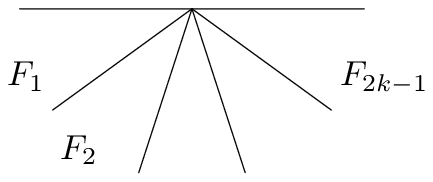}
\caption{Two measures and a $(4m+2)$-fan without equipartitioning.}
\label{pict:4k2fan}
\end{figure}
\end{center}
\end{example}

\begin{example}
Now we construct two measures for an arbitrary $4$-fan without two opposite rays that that will show that the conclusion of Theorem~\ref{opp-rays} fails for such fans (see Figure~\ref{pict:4fan-arb}). It is easy to establish by a small case analysis that, for any such fan,  it is possible to find angles $F_1$ and $F_3$ out of $F_1, F_2, F_3, F_4$ so that $-F_1$ is contained in $F_3$ and $F_3\setminus (-F_1)$ has two connected components. 

Now we split every measure into $3$ equal parts and do the following. One red point is put to the origin, two blue points are put in the respective components of $F_3\setminus(-F_1)$ near the origin, two red points are put in the respective components of $(-F_3)\setminus F_1$ much farther from the origin so that one of them is in $F_2$ and the other is in $F_4$, and we put the remaining blue point in $F_1$ very far from the origin.

Then we notice the following. If $F_3$, after a translation, touches two red points then it also contains the two blue points closer to the origin. If a translated $F_1$ touches two red points then it definitely contains in the interior one of the closer blue points and the farthest blue point. As for a translation of $F_2$ (and analogously $F_4$), if it touches some two of the blue points then it contains in the interior the red point at the origin and the other red point form the same side as $F_2$ as well.

\begin{center}
\begin{figure}[!ht]
\includegraphics[width=0.4\textwidth]{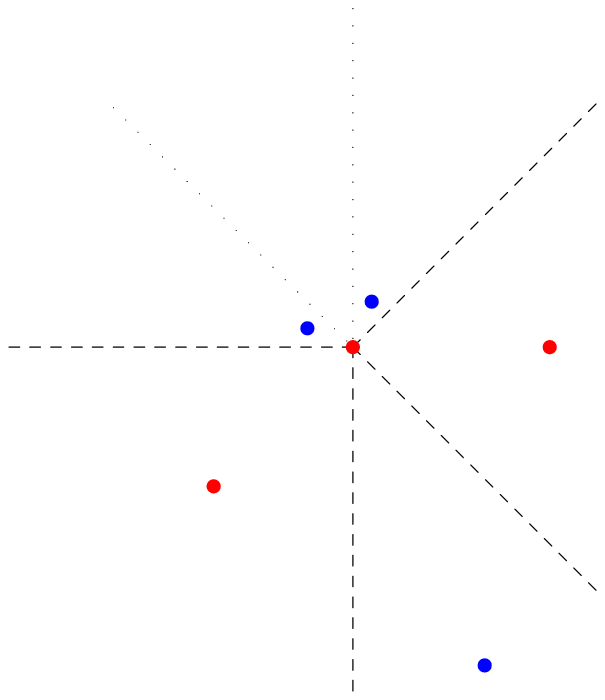}
\hskip 0.5cm
\includegraphics[width=0.4\textwidth]{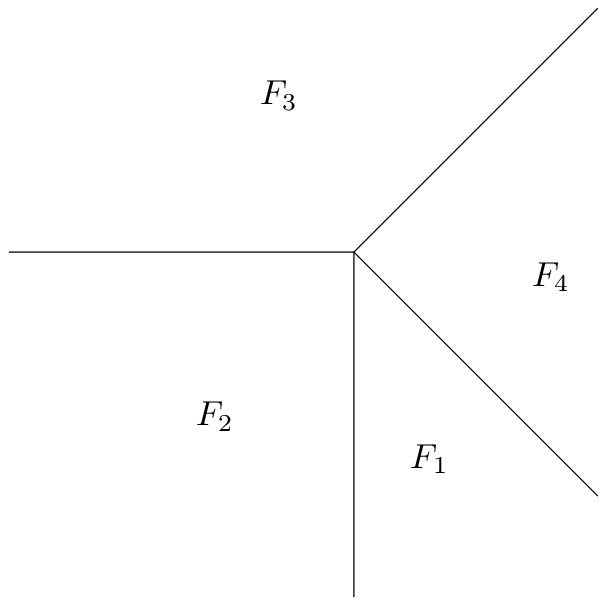}
\caption{Two measures and an arbitrary $4$-fan without equipartitioning.}
\label{pict:4fan-arb}
\end{figure}
\end{center}
\end{example}

\section{Measures with non-compact support}\label{section:noncompact}

Using the same approach as before we can extend Theorems \ref{odd-fans} and \ref{symm-fans} to the case of pairs of measures with non-compact support:

\begin{theorem}
Theorem \ref{odd-fans} holds for measures with non-compact support if no line parallel to any ray from the fan $\mathcal{F}$ divide both measures $\mu_1$ and $\mu_2$ into equal halves.
\end{theorem}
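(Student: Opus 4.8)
The plan is to derive this from Theorem~\ref{odd-fans} (the compactly supported case) by a compactness argument, the hypothesis being used only to prevent the equipartitioning angle from escaping to infinity. Fix a disk $D_n$ of radius $n$ and set $\mu_i^n=\mathbf 1_{D_n}\mu_i/\mu_i(D_n)$ for $i=1,2$; these are nice, compactly supported measures, and $\mu_i^n(A)\to\mu_i(A)$ for every angle $A$. By Theorem~\ref{odd-fans} there are, for each $n$, an index $i_n$ and a translate $A_n=\mathbf t_n+F_{i_n}$ with $\mu_1^n(A_n)=\mu_2^n(A_n)=\tfrac12$. Passing to a subsequence we may assume $i_n\equiv i$ is constant, so $A_n=\mathbf t_n+F_i$ for a fixed angle $F_i$ of $\mathcal F$.

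Suppose first that $\mathbf t_n$ stays bounded; pass to a subsequence with $\mathbf t_n\to\mathbf t$. Then for every disk $D$ the sets $A_n\cap D$ converge to $(\mathbf t+F_i)\cap D$, and since $\mu_i$ assigns zero measure to $\partial(\mathbf t+F_i)$ and $\mu_i^n\to\mu_i$, one gets $\mu_i(\mathbf t+F_i)=\tfrac12$ for $i=1,2$, which is exactly what we want. So it suffices to rule out $|\mathbf t_n|\to\infty$.

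Assume $|\mathbf t_n|\to\infty$ and pass to a further subsequence with $\mathbf t_n/|\mathbf t_n|\to u$, a unit vector. A point $x$ lies in $A_n$ iff $x-\mathbf t_n\in F_i$; since $x-\mathbf t_n$ runs to infinity in the direction $-u$, elementary geometry of the angle $F_i$ leaves three possibilities for the local limit of $A_n$. If $-u$ points into the interior of the cone of $F_i$, then $A_n\cap D\to D$ for every disk $D$; if $-u$ points outside the closed cone, then $A_n\cap D\to\varnothing$ for every $D$. In either case, combining the tail bound $\mu_i^n(\mathbb R^2\setminus D)\le\mu_i(\mathbb R^2\setminus D)/\mu_i(D_n)$ (uniform in $n$ for $n$ large, as $\mu_i(D_n)\to1$) with the convergence of $A_n\cap D$ and letting $D$ exhaust the plane, we obtain $\mu_i^n(A_n)\to1$ or $\mu_i^n(A_n)\to0$, contradicting $\mu_i^n(A_n)=\tfrac12$. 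The remaining case is that $-u$, hence $u$, is parallel to one of the two sides of $F_i$, i.e. parallel to some ray direction $\mathbf e_\ell$ of $\mathcal F$. Then (on a further subsequence) $A_n\cap D$ converges either as above --- giving the same contradiction --- or to $H\cap D$, where $H$ is a halfplane whose boundary line $L$ is parallel to $\mathbf e_\ell$. In this last case the same limit computation gives $\mu_i(H)=\tfrac12$ for $i=1,2$, so the line $L$, parallel to a ray of $\mathcal F$, halves both measures, contradicting the hypothesis. Hence $\mathbf t_n$ is bounded, and the previous paragraph finishes the proof.

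The step needing care is the interchange of the two limits in statements such as $\mu_i^n(A_n)\to\mu_i(H)$, since both the sets $A_n$ and the measures $\mu_i^n$ vary with $n$. I would handle it exactly as indicated above: on a fixed large disk $D$ the sets $A_n\cap D$ converge to (limit set)$\,\cap D$, whose $\mu_i$-boundary is null, so $\mu_i^n(A_n\cap D)\to\mu_i\big((\text{limit set})\cap D\big)$ using $\mu_i^n\to\mu_i$; outside $D$ the contribution is at most $\mu_i^n(\mathbb R^2\setminus D)$, uniformly small for $n$ large because $\mu_i(D_n)\to1$; letting $D$ exhaust $\mathbb R^2$ gives the claim. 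The accompanying geometric fact --- that the local limits of $\mathbf t_n+F_i$ as $\mathbf t_n\to\infty$ are the empty set, the whole plane, or a halfplane bounded by a line parallel to a side of $F_i$ --- is a routine exercise, as is the reduction (carried out before invoking Theorem~\ref{odd-fans}) to measures for which $\mu_i(D_n)>0$.
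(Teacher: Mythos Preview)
Your argument is correct. It coincides with the second of the two proofs the paper offers: the paper explicitly mentions, as an alternative, the going-to-the-limit argument in which the only obstruction to extracting a convergent angle is that $A_k\cap D_r$ tends to a halfplane, yielding a line parallel to a fan ray that halves both measures---exactly what you wrote out in detail.

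The paper's \emph{primary} proof is different, though. Rather than approximating by compactly supported measures and invoking Theorem~\ref{odd-fans} as a black box, it redoes the construction of the sets $S_j^i$ directly for the non-compact measures: these sets no longer contain literal rays in the directions $-\mathbf e_j$, $-\mathbf e_{j+1}$, but are asymptotic to such rays, so one can still define the comparison angles $\alpha_j^i$; the hypothesis that no fan-parallel line halves both measures guarantees that $\partial\alpha_j^1$ and $\partial\alpha_j^2$ never share a boundary ray, so in the ``neither contains the other'' case the intersection index is odd and the curves in $S_j^1$ and $S_j^2$ must cross. Your approach is cleaner in that it avoids reopening the proof of Theorem~\ref{odd-fans}; the paper's direct approach has the minor advantage of making transparent exactly where the hypothesis enters the original argument.
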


\begin{proof}
As in the proof of Theorem \ref{odd-fans}, construct the sets $S_j^i$ of all possible translations of the angle $F_j$ that divides $\mu_i$ into equal parts. If $\mu_i$ has non-compact support, this set does not necessarily contain two rays parallel to the sides of the angle $-F_j$, but it can be approximated by such rays when we go sufficiently far from the origin. Thus, as before we can construct similar angles $\alpha_j^i$ with sides on lines parallel to rays of the fan $\mathcal{F}$ that divides measures into equal parts.

Assume that for some $j$ the angle $\alpha^1_j$ is not a subset of the angle $\alpha^2_j$ and $\alpha^2_j$ is not a subset of $\alpha^1_j$. These angles does not have a common boundary ray, and we can find a point of intersection $S_j^1$ and $S_j^2$ using the same argument of odd index of intersection of a pair of curves. This completes the proof.

Another possible argument to prove this extension is to use going to the limit argument. The only case when we cannot choose a limit of a sequence of angles $A_k$ is when the intersection of $A_k$ with every disk $D_r$ tends to a half-plane. But in this case we would obtain a half-plane cutting precisely half of every measure, which is assumed to be impossible.
\end{proof}

Adding the same restriction we can modify the Theorem \ref{symm-fans} for measures with non-compact support. However, for arbitrary measures it is not always possible to find a translation of an angle that divides both measures into equal halves. We show this below for the case of two normally distributed measures.

\begin{lemma} No angle with angular measure less than $\pi$ can simultaneously cut a half of both measures $\mu_1$ and $\mu_2$ with densities $\rho_1(x,y)=\frac{1}{2\pi}\exp(-\frac{x^2+y^2}{2})$ and $\rho_2(x,y)=\frac{1}{8\pi}\exp(-\frac{x^2+y^2}{8})$. 
\end{lemma}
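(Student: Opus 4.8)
The plan is to argue by contradiction. Suppose an angle $A$ of angular measure $\theta\in(0,\pi)$ satisfies $\mu_1(A)=\mu_2(A)=\tfrac12$. Write $A=v+C$, where $v$ is the vertex of $A$ and $C$ is the convex cone with apex at the origin, opening $\theta$, and the same edge directions as $A$, so that $cA:=c(v+C)=cv+C$ for $c>0$. I will use three elementary facts: (a) each $\mu_i$ is invariant under rotations and under $x\mapsto -x$ about the origin, has a strictly positive density, and hence gives positive mass to every set with non-empty interior and mass exactly $\tfrac12$ to every closed half-plane whose boundary passes through the origin; (b) $\mu_2$ is the image of $\mu_1$ under the homothety $x\mapsto 2x$ (a direct density computation), so $\mu_2(B)=\mu_1(\tfrac12 B)$ for every Borel $B$; (c) for the cone $C$ and any vector $w$, $w+C\subseteq C$ iff $w\in C$, and if $w\in C\setminus\{0\}$ then $-w\notin C$ (as $\theta<\pi$), so the origin lies in $C\setminus(w+C)$, which therefore has non-empty interior. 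I split on the position of $v$ relative to $-C$.

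\textbf{Case $v\notin(-C)\setminus\{0\}$.} Let $C':=\operatorname{cone}(C\cup\{v\})$ be the convex cone with apex $0$ generated by $C$ together with $v$; since a convex cone is closed under addition, $A=v+C\subseteq C'$. I claim the opening of $C'$ is still $<\pi$: this is clear if $v=0$ or $v\in C$, when $C'=C$; and if $v\notin C\cup(-C)$ then, writing the direction of $v$ as $\gamma$ and the direction arc of $C$ as $[\alpha,\beta]$ with $\beta-\alpha=\theta$, the conditions $\gamma\notin[\alpha,\beta]$ and $\gamma\notin[\alpha+\pi,\beta+\pi]$ place $\gamma$ in an open arc of length $\pi-\theta$ adjacent to $[\alpha,\beta]$, so the direction arc of $C'$ has length $<\pi$. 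Hence $C'$, and a fortiori $A$, is contained in a closed half-plane $H$ with $0\in\partial H$; and because the opening of $C'$ is strictly below $\pi$ there is a direction pointing into the open side of $H$ but out of $C'$, so $H\setminus C'\subseteq H\setminus A$ contains points far out along that direction and thus has non-empty interior. By (a), $\mu_1(A)\le\mu_1(C')<\mu_1(H)=\tfrac12$, contradicting $\mu_1(A)=\tfrac12$.

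\textbf{Case $v\in(-C)\setminus\{0\}$.} Here I use the dilation. For $0<c_1<c_2$ one has $(c_1-c_2)v=(c_2-c_1)(-v)\in C$ because $-v\in C$, so by (c) $(c_1-c_2)v+C\subseteq C$, i.e. $c_1A=c_1v+C\subseteq c_2v+C=c_2A$; translating by $-c_2v$ identifies $c_2A\setminus c_1A$ with $C\setminus\big((c_1-c_2)v+C\big)$, which has non-empty interior by (c). Hence $c\mapsto\mu_1(cA)$ is strictly increasing on $(0,\infty)$, and using (b), $\mu_2(A)=\mu_1(\tfrac12A)<\mu_1(A)=\tfrac12$, contradicting $\mu_2(A)=\tfrac12$. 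This covers every position of $v$, so no such angle $A$ exists.

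\textbf{Main obstacle.} The genuinely delicate point is the case $v\in -C$: there the half-plane argument of the first case breaks down, and in fact $\mu_1(A)$ can honestly equal $\tfrac12$ for such $A$ (a vertex placed far out along $-C$ with a very thin cone $C$ makes $A$ look near the origin like a narrow strip through it, whose $\mu_1$-mass can be tuned to $\tfrac12$). So in that case the contradiction must come from the interaction of the two measures, exploiting that $\mu_2$ is not merely ``another centered Gaussian'' but an actual dilate of $\mu_1$; this lets the value $\mu_1(A)=\tfrac12$ be transported to $\mu_2(\tfrac12A)$ and compared with $\mu_1(A)$ along the monotone nested family $\{cA\}_{c>0}$. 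It is worth noting that $\theta<\pi$ is used essentially in both cases (to keep the opening of $C'$ below $\pi$ in the first, and to obtain strictness of the inclusions $c_1A\subseteq c_2A$ in the second), which is consistent with the fact that for $\theta=\pi$ any half-plane through the origin halves both measures and the statement fails.
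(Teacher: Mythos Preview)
Your proof is correct and follows essentially the same approach as the paper's: the half-plane containment argument for vertices outside $-C$ is exactly the paper's ``property~(2)'', and your scaling-plus-nesting argument in Case~2 (using $\mu_2(A)=\mu_1(\tfrac12 A)<\mu_1(A)$) is equivalent to the paper's use of $f_1(a,b)=f_2(2a,2b)$ together with $F_+(a,b)\subsetneq F_+(2a,2b)$. One cosmetic point: in your fact~(c), the origin is only on the \emph{boundary} of $C$, so its membership in $C\setminus(w+C)$ does not by itself yield non-empty interior---but any interior point of $C$ sufficiently close to $0$ does, since $-w\notin C$ and $C$ is closed.
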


\begin{proof} Assume $F$ is a translation of the initial angle that has vertex in the origin $(0,0)$. Denote as $F_+(a,b)$ the translation of the angle $F$ by the vector $(a,b)$ and by $f_i(a,b)$ denote the $i$th measure of the angle $F_+(a,b)$, i.e. 
$$f_i(a,b):=\iint\limits_{F_+(a,b)}\rho_i(x,y)\;dxdy.$$

We will use the following properties of functions $f_1$ and $f_2$:
\begin{enumerate}
\item $f_1(a,b)=f_2(2a,2b)$.
\item If for some $i$ $f_i(a,b)=1/2$, then $(a,b)$ lies strictly inside the angle $-F$, centrally symmetric to $F$ with respect to the origin.
\end{enumerate}

The first property can be easily obtained by substituting $x=2x'$ and $y=2y'$ in the double integral formula for $f_2(2a,2b)$. Assume that the second property fails, then there is a half-plane $\pi$ containing the angle $F_+(a,b)$ and not containing the origin in its interior. So, $\mu_i(F_+(a,b))<\mu_i(\pi)\leq 1/2$, which is impossible.

Now, assume there is a vector $(a,b)$ such that $f_1(a,b)=f_2(a,b)=1/2$. Then $f_2(a,b)=f_2(2a,2b)=1/2$, but due to the second property, the angle $F_+(2a,2b)$ contains the angle $F_+(a,b)$ in its interior and equality $f_2(a,b)=f_2(2a,2b)$ is impossible.
\end{proof}

From the above proof it is clear that we could use two arbitrary normally distributed measures with common mean vector and proportional, but distinct, covariance matrices.

\begin{corollary}
If a fan $\mathcal{F}$ does not contain angle equal to $\pi$ then no translation of any angle from $\mathcal{F}$ can divide both measures $\mu_1$ and $\mu_2$ from the previous lemma into equal parts.
\end{corollary}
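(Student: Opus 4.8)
The plan is to deduce the corollary directly from the preceding lemma, splitting into cases according to the angular measure $\theta_j\in(0,2\pi)$ of an angle $F_j\in\mathcal F$, which by hypothesis is never equal to $\pi$. In the case $\theta_j<\pi$, any translate $\mathbf t+F_j$ is again an angle of angular measure $\theta_j<\pi$, so the lemma applies verbatim and shows that $\mathbf t+F_j$ cannot simultaneously halve $\mu_1$ and $\mu_2$; there is nothing more to do in this case.

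The only remaining case is the reflex one, $\theta_j>\pi$, and here I would pass to the complement. I would observe that the closure $G$ of $\mathbb R^2\setminus(\mathbf t+F_j)$ is again an angle --- with the same vertex and the same two bounding rays as $\mathbf t+F_j$ --- but now of angular measure $2\pi-\theta_j<\pi$. Since $\mu_1$ and $\mu_2$ have continuous densities, they assign zero mass to the two bounding rays, so $\mu_i(\mathbf t+F_j)+\mu_i(G)=1$ for $i=1,2$; hence $\mu_i(\mathbf t+F_j)=\tfrac12$ would force $\mu_i(G)=\tfrac12$ for both $i$, contradicting the lemma applied to the angle $G$. Combining the two cases, and recalling that the value $\theta_j=\pi$ is excluded by assumption, finishes the argument.

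I do not expect any genuine obstacle here: the only two points that need a line of verification are that complementing a translated angle again yields a translated angle, of complementary angular measure, up to a set of measure zero, and that the two Gaussian densities put no mass on the bounding rays --- both are immediate from the definitions and from continuity of the densities.
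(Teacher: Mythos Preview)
Your proof is correct. The paper states this corollary without proof, treating it as an immediate consequence of the lemma; your case split is exactly the right way to spell out the details. In particular, your observation that a reflex angle (angular measure $>\pi$) must be handled via its complement is a point the paper glosses over: the lemma's proof really does use that the angle fits into a half-plane, so it only applies directly when $\theta_j<\pi$, and the complement trick is needed to cover the remaining case.
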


\end{document}